\theoremstyle{definition}     
\newtheorem{thm}{Theorem}
\newtheorem{prop}{Proposition}
\newcommand\reals{\mathbb{R}}    
\newcommand\R{{\mathbb{R}}}    
\newcommand\N{\mathbb{N}} 
\newcommand\Z{\mathbb{Z}} 
\newcommand\eps{\varepsilon}    
\newcommand\dist{{\rm dist}}
\newcommand{\APP}{\operatornamewithlimits{APP}} 
\newcommand{\INT}{\operatornamewithlimits{INT}} 
\newcommand{\OPT}{\operatornamewithlimits{OPT}} 
\newcommand{\e}{\varepsilon}    
\newcommand{\wt}{\widetilde} 
\newcommand{\vol}{{\rm vol}}    
\newcommand{\points}{\mathcal{P}} 
\newcommand{\ind}[1]{\mathds{1}_{#1}}    
\newcommand\dint{\,{\rm d}}
\newcommand{\C}{\mathcal{C}}
\renewcommand{\phi}{\varphi}
\newlength{\fixboxwidth}    
\title{Product rules are optimal for numerical integration\\
in  classical smoothness spaces} 
\author{
Aicke Hinrichs
\\ 
Institut f\"ur Analysis, Johannes Kepler Universit\"at\\ 
Linz, Austria,  
aicke.hinrichs@jku.at\\  
Erich Novak
\\ 
Mathematisches Institut, Universit\"at Jena\\ 
Ernst-Abbe-Platz 2, 07743 Jena, Germany, 
erich.novak@uni-jena.de\\ 
Mario Ullrich
\\ 
Institut f\"ur Analysis, Johannes Kepler Universit\"at\\ 
Linz, Austria,  
mario.ullrich@jku.at\\  
Henryk Wo\'zniakowski
\\ 
Department of Computer Science, Columbia University,\\ 
New York, NY 10027, USA, and\\ 
Institute of Applied Mathematics, University of Warsaw\\ 
ul. Banacha 2, 02-097 Warszawa, Poland, 
henryk@cs.columbia.edu}
\begin{document}    

\maketitle 

\centerline{\large{Dedicated to the memory of Joseph F.~Traub}}  
 
\medskip 

\begin{abstract}    
We mainly study numerical integration of real valued 
functions defined on the $d$-dimensional unit 
cube with all 
partial derivatives up to some finite order~$r\ge1$ bounded by one. 
It is well known that optimal algorithms that use $n$ function values 
achieve the error rate $n^{-r/d}$, where the hidden constant 
depends on $r$ and $d$. 
Here we prove explicit error bounds without hidden constants and,
in particular, show that the optimal order
of the error is $\min \bigl\{1, d \, n^{-r/d}\bigr\}$,
where now the hidden constant only depends on $r$, not on $d$. 
For $n=m^d$, this optimal order can be achieved by (tensor) product rules. 

We also provide lower bounds for integration defined over an arbitrary 
open domain of volume one.
We briefly discuss how lower bounds for integration may be applied for
other problems such as multivariate approximation and optimization. 
\end{abstract}     
 
\bigskip 
 
\section{Introduction}

Multivariate integration is nowadays a popular research problem
especially when the number of variables $d$ is huge. 
In this paper we mainly study numerical integration of $r\ge1$ times
continuously differentiable periodic and nonperiodic 
functions defined over the $d$-dimensional unit cube
whose             partial derivatives up to order $r\ge1$ are bounded by one.
Already in 1959, Bakhvalov \cite{B59} proved that 
the minimal number $n=n(\e,d,r)$ of function values 
which is needed to achieve an error at most $\eps>0$ 
satisfies 
$$
c_{d,r}\eps^{-d/r}\le n(\e,d,r)\le C_{d,r}\eps^{-d/r}
$$
for some positive $c_{d,r}$ and $C_{d,r}$ 
and the upper bounds are achieved by 
product rules. Note that for fixed $d$ and $r$ we have a sharp
behaviour with respect to $\e^{-d/r}$ and $n(\e,d,r)=\Theta(\e^{-d/r})$.

For large $d$, we would like to know how $c_{d,r}$ and $C_{d,r}$
depend on $d$. Unfortunately up to~2014, 
the knowledge on the dependence on $d$ was quite limited since 
the known lower bound on $c_{d,r}$ was exponentially small 
in $d$ whereas the known upper bound on $C_{d,r}$ 
was exponentially large in $d$. In \cite{HNUW12}, we proved for the
nonperiodic case that there exists a positive $c_r$ such that for all
$d$ and $\e\in(0,1)$ we have  
\begin{equation}\label{oldlowerbound}
n(\e,d,r)\ge c_r(1-\e)\,d^{\,d/(2r+3)}.
\end{equation}
Hence, we have a super-exponential dependence on $d$. This means 
that numerical integration suffers from the so-called
curse of dimensionality for fixed $r$. 

However, the exponent $d$ in \eqref{oldlowerbound} is 
$d/(2r+3)$, whereas in Bakhvalov's lower bound it is larger 
and equals       $d/r$. 
Furthermore, there is really no dependence on
$\e^{-1}$ in \eqref{oldlowerbound}, 
although we expect from Bakhvalov's bounds that it should be 
$\e^{-d/r}$.     

This is the point of departure of the current paper. We improve the
lower bound \eqref{oldlowerbound} and find a matching upper bound. 
Furthermore we will do it also for the periodic case which was not
studied in \cite{HNUW12}. The lower bound is found similarly as in
\cite{HNUW12} but instead of working with balls in the $\ell_2$-norm
we switch to balls in the $\ell_1$-norm which yields a better result.
The upper bound is achieved by product rules of $d$ copies of the
rectangle (or trapezoidal) quadrature  for the periodic case and of the
Gaussian quadrature for the nonperiodic case. 

We need a few definitions to formulate our results. 
We mainly study the problem of numerical integration, i.e., of approximating
the integral
\begin{equation}\label{eq:problem}
S_d(f) = \int_{[0,1]^d} f(x) \, \dint x
\end{equation}
for integrable functions $f\colon [0,1]^d\to\R$. 

The function class under consideration is the unit ball in the 
space of all $r$-times continuously differentiable functions 
on $[0,1]^d$, i.e.,
\[
\C^{\,r}_d \,=\, \{f\in C^{\,r}([0,1]^d)\colon \|D^\beta f\|_\infty\le 1 
        \,\text{ for all }\, \beta\in\N_0^d \,\text{ with }\, |\beta|_1\le r\}
\]
equipped with the norm 
$$
\|f\|_{\C^{\,r}_d} := \max_{\beta\colon |\beta|_1\le r} \|D^\beta
f\|_\infty.
$$ 
Here, $D^\beta$ denotes the usual (weak) partial derivative of 
order $\beta\in\N_0^d$.
Moreover, the sup-norm of a bounded function $f$ is given by 
$\|f\|_\infty=\sup_{x\in[0,1]^d}|f(x)|$.

We consider algorithms for approximating $S_d(f)$ that use finitely
many function values. More precisely, the general form 
of an algorithm
that uses $n$ function values is
$$
A_n(f)=\phi_n(f(x_1),f(x_2),\dots,f(x_n))\ \ \ \ \ 
\mbox{for all}\ \ \ f\in\C^{\,r}_d ,
$$
where $\phi_n:\R^d\to \R$ may be a nonlinear mapping and the sample
points $x_i\in[0,1]^d$ may be chosen adaptively, that is, the choice
of $x_i$ may depend on the already computed
$f(x_1),f(x_2),\dots,f(x_{i-1})$. Nonadaption means that the choice
of $x_i$ is independent of $f$, i.e., it is the same for all functions
$f$ from $\C^{\,r}_d$. Obviously even for the nonadaptive case, $x_i$
may depend on $n$ and the class $\C^{\,r}_d$. 

We consider the worst case setting 
in which the error of $A_n$ is defined as
$$
e(A_n)=\sup_{f\in\C^{\,r}_d }|S_d(f)-A_n(f)|.
$$
The $n$th minimal (worst case) error is given by
$$
e_n(\C^{\,r}_d )=\inf_{A_n}e(A_n),
$$
where the infimum is taken over all algorithms $A_n$, i.e., over all
mappings $\phi_n$ and adaptive choices of sample points
$x_1,x_2,\dots,x_n$ from $[0,1]^d$.

It is known by the result of Bahvalov \cite{B71} on adaption 
and the result of Smolyak \cite{S65} on nonlinear algorithms,
see also \cite{NW08} or \cite{TWW88}, 
that without loss of generality we may restrict ourselves to linear
algorithms and nonadaptive sample points, i.e., to algorithms of the
form
$$
A_n(f)=\sum_{i=1}^na_if(x_i)
$$    
for some real $a_i$ and some $x_i\in[0,1]^d$. Furthermore,
\begin{equation}\label{low1}
e_n(\C^{\,r}_d )=\inf_{a_i,x_i}\
\sup_{f\in\C^{\,r}_d }\big|S_d(f)-\sum_{i=1}^na_if(x_i)\big|
=\inf_{x_i}\ \sup_{f\in \C^{\,r}_d , \ f(x_1)=\cdots =f(x_n)=0}|S_d(f)|.
\end{equation}
Note that for $n=0$ we do not sample functions and 
$$
e_0(\C^{\,r}_d )=\|S_d\|=1.
$$

We can now formally define the minimal number of function values
needed to compute an~$\e$-approximation as 
$$
n(\e, d, r ) =\min\bigl\{n\colon\ e_n(\C^{\,r}_d )\le \e\,\bigr\}.
$$ 
Clearly, for $\e\ge1$ we have $n(\e, d, r ) =0$ and, therefore, we
always assume that $\e\in(0,1)$. 

We briefly discuss the results obtained in this paper 
and start with simplified results that might be easier to digest. 

\begin{thm} \label{thm:main}
For all $r\in\N$ there exist constants $c_{r,1},c_{r,2}>0$ such that for all 
$d\in\N$ and $\eps\in(0,1/2)$, we have
\begin{equation} 
c_{r,1}^d \left(\frac{d}{\eps}\right)^{d/r} 
\;\le\; n\bigl(\eps, d, r \bigr) 
\,\le\, c_{r,2}^d \left(\frac{d}{\eps}\right)^{d/r}. 
\end{equation} 
Moreover, the lower bound holds 
when $S_d$ is defined as integration over an arbitrary open set in
$\R^d$ of volume 1,
and the upper bound holds for arbitrary $\eps\in (0,1)$. 
\qed
\end{thm}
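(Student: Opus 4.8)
The plan is to establish the two bounds separately, using the $\ell_1$-ball construction announced in the introduction for the lower bound and product (tensor) rules for the upper bound. Both halves rest on the same combinatorial heart: understanding how well one can approximate the integral of a single smooth bump, and then how many disjoint translated copies of such a bump fit into the cube. The key quantity will be the $L_\infty$-error of integrating a fixed smooth ``fooling function'' of support radius $\delta$, which scales like $\delta^{r}$ times its volume, so that $n$ disjoint bumps force an error of order (number of bumps not hit) $\times \delta^{r+d}$ while the volume constraint caps the bump count at roughly $\delta^{-d}$.

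\medskip

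\emph{Lower bound.} First I would fix a nonnegative $C^r$ bump $\psi$ supported on the $\ell_1$-ball of radius $\delta$, normalized so that $\|\psi\|_{\C^r_d}\le 1$; scaling shows $\|D^\beta\psi\|_\infty$ is controlled by $\delta^{-|\beta|_1}$ times a dimension-free constant depending only on $r$, which forces $\delta$ to be at most order one and pins the admissible $\delta$ to a constant-factor window. The $\ell_1$-ball is the crucial choice: its volume is $(2\delta)^d/d!$, so by the constraint that all bumps lie in $[0,1]^d$ one can pack at least $N \approx d!\,(2\delta)^{-d}$ disjoint translates, a count that beats the $\ell_2$-ball used in \cite{HNUW12} precisely by the $d!$ factor, which is what upgrades the exponent from $d/(2r+3)$ to $d/r$ and injects the extra $d^{d/r}$ growth. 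Given any $n$ sample points, at least $N-n$ of these disjoint bumps are unhit; summing (or signing) them via \eqref{low1} produces a nonzero admissible function vanishing at all $x_i$ whose integral is at least $(N-n)$ times the integral of a single bump, i.e.\ of order $(N-n)\,\delta^{r}\,\delta^{d}/d!$. Choosing $N\approx 2n$ (which fixes $\delta$ in terms of $n$ and $d$) and solving $e_n \ge \e$ yields $n \ge c_{r,1}^d (d/\e)^{d/r}$ after absorbing the $d!$ and the $d^{d/r}$ via Stirling. The same bump argument only uses that the domain has volume one and contains room for the packing, giving the stated generalization to arbitrary open sets of volume one.

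\medskip

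\emph{Upper bound.} Here I would take $n=m^d$ and use the product rule $A_n=Q_m^{\otimes d}$, where $Q_m$ is a one-dimensional quadrature exact on polynomials of degree $<r$ (Gauss for the nonperiodic case, rectangle/trapezoidal for the periodic case). The standard tensor error decomposition writes $S_d-Q_m^{\otimes d}$ as a telescoping sum of $d$ terms, each applying the one-dimensional error functional $S_1-Q_m$ in a single coordinate while integrating or quadraturing the rest; since $\|S_1-Q_m\|$ on $\C^r_1$ is of order $m^{-r}$ with a constant depending only on $r$, and the partial-derivative norms are all bounded by one, each of the $d$ terms contributes at most $c_r\,m^{-r}$, giving $e(A_n)\le d\,c_r\,m^{-r}$. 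Setting this $\le \e$ and recalling $m=n^{1/d}$ gives $m \ge (d\,c_r/\e)^{1/r}$, hence $n=m^d \le c_{r,2}^d (d/\e)^{d/r}$ after the usual ceiling adjustment. The main obstacle I anticipate is twofold: keeping every dimension-dependent constant explicit so that only the clean $c_{r,i}^d (d/\e)^{d/r}$ form survives (in particular, controlling how the $d!$ in the $\ell_1$-volume interacts with Stirling to produce exactly the $d^{d/r}$ factor), and handling the rounding from the continuous parameters $\delta$ and $m$ to integer sample counts without letting the per-coordinate constants leak dimension dependence into the base of the exponential.
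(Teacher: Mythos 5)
Your upper bound is essentially the paper's own argument (product Gaussian rules, Haber's tensor-product bound $e(Q_m^{\,d},\C^{\,r}_d)\le d\, e(Q_m,\C^{\,r}_1)$ for positive rules, and K\"ohler's one-dimensional estimate $c_r m^{-r}$ valid for $m\ge r+1$), and it is correct modulo those citations. The lower bound, however, contains a fatal gap: a sum-of-disjoint-bumps (packing) argument \emph{cannot} produce the factor $d^{d/r}$, no matter how the bump profile or the packing is chosen. The obstruction is a Taylor bound on the per-bump integral. If $\psi$ is $C^r$, vanishes outside $\delta B_1^d$, and satisfies $\|\psi\|_{\C^{\,r}_d}\le 1$, then all derivatives of order $\le r$ vanish on the boundary of the support; Taylor expansion around the nearest boundary point, together with $\sum_{|\beta|_1=r}\tfrac{r!}{\beta!}\,|z|^{\beta}=\|z\|_1^r$ and $\dist(x,\partial(\delta B_1^d))=\delta-\|x\|_1$, gives $|\psi(x)|\le (\delta-\|x\|_1)^r/r!$, hence
\[
\int_{\R^d}\psi(x)\,\dint x\;\le\;\frac{1}{r!}\int_{\delta B_1^d}\bigl(\delta-\|x\|_1\bigr)^r\dint x\;=\;\frac{2^d\,\delta^{\,d+r}}{(d+r)!}\,.
\]
This is smaller than your claimed bump integral (height $\approx\delta^r$ times volume, i.e.\ $\approx\delta^{r}\delta^{d}/d!$) by a factor of order $d!/(d+r)!\asymp d^{-r}$: a $C^r$ bump on a thin high-dimensional cross-polytope cannot be essentially flat on most of its support, so your premise that the derivative constants are dimension-free \emph{and} the fill factor is of order one is unachievable. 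Consequently, for any family of $N$ bumps with disjoint supports in a domain of volume one, $N(2\delta)^d/d!\le 1$, and the total fooling integral is capped by
\[
N\cdot\frac{2^d\,\delta^{\,d+r}}{(d+r)!}\;\le\;\frac{d!}{(2\delta)^d}\cdot\frac{2^d\,\delta^{\,d+r}}{(d+r)!}\;=\;\frac{d!}{(d+r)!}\,\delta^{\,r}\;\le\;\Bigl(\frac{\delta}{d}\Bigr)^{r}.
\]
To force an error exceeding $\eps$ you therefore need $\delta> d\,\eps^{1/r}$, and then the packing count itself obeys $N\le d!/(2\delta)^d\le e\sqrt{d}\,(2e)^{-d}\eps^{-d/r}$. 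So the strongest conclusion your scheme can ever reach is $n\ge c^d\,\eps^{-d/r}$; the factor $d^{d/r}$, which is the entire point of the theorem, is provably out of reach of this method. (Secondarily, the claim that $\ell_1$-balls pack $[0,1]^d$ with density near $1$ is unsubstantiated --- cross-polytopes do not tile in general, and the generic Minkowski--Hlawka guarantee is only about $2^{-d}$ --- but that alone would merely worsen the constant $c_{r,1}$.)

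The paper avoids exactly this obstruction by \emph{not} using disjoint bumps. Its fooling function is a single global plateau: it equals its (normalized) maximal height on all of the domain except within $\ell_1$-distance $3\varrho$ of the $n$ sample points, where it dips to $0$, with smoothness obtained by $r$-fold convolution of the $\ell_1$-distance tent with normalized indicators of $\ell_1$-balls of radius $\varrho/r$. There the boundary-decay penalty quantified above is paid only in the $n$ dips, of total volume at most $n(6\varrho)^d/d!$, rather than around each of $N\gg n$ cells; the derivative bounds $\|D^\beta f_\varrho\|_\infty\le\varrho^{-\ell}(dr)^{\ell-1}$ cost only a polynomial factor in $d$, while the $\ell_1$-volume gains the full $1/d!$. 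Normalizing and choosing $\varrho=(\eps/\delta)^{1/r}(dr)^{1-1/r}$ then yields $n\ge(1-\delta)\,c_r^d\,(\delta d/\eps)^{d/r}$ with $c_r=1/(6er^{1-1/r})$, which is the stated lower bound; and since the argument uses nothing about the domain beyond $\vol_d(D_d)=1$, the extension to arbitrary open sets of volume one is immediate. If you want to keep a local picture, the correct intuition is that the paper's function is the complement of bumps placed at the $n$ data points (whose number the adversary controls), not a sum of bumps placed at $N$ points of your choosing.
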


For the errors, the corresponding result is the following.

\begin{thm} \label{thm:main2}
For all $r\in\N$ there exist constants $c_{r,1},c_{r,2}>0$ 
such that for all $d,n\in\N$ 
with $n=m^d$ for some $m\in\N$, we have
\begin{equation} 
\min \bigl\{ {\textstyle\frac12}, c_{r,1} \, d\, n^{-r/d}\bigr\}
\;\le\; e_n \bigl(\C^r_d \bigr) 
\,\le\, \min \bigl\{ 1,  c_{r,2} \, d \,n^{-r/d}\bigr\}.
\end{equation} 
The lower bound holds for all $n\in\N$, 
whereas the upper bound has to be replaced by
$\min \bigl\{ 1, c_{r,2} \ d \, (n^{1/d}-1)^{-r}\bigr\}$ for general $n\in\N$.
  \qed
\end{thm}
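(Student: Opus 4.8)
The plan is to deduce Theorem~\ref{thm:main2} from Theorem~\ref{thm:main} by exploiting the elementary duality between the minimal errors $e_n(\C^r_d)$ and the information complexity $n(\e,d,r)$. Recall that $e_0(\C^r_d)=1$ and that $n\mapsto e_n(\C^r_d)$ is nonincreasing (an $n$-point algorithm is a special $(n{+}1)$-point algorithm), while $n(\e,d,r)=\min\{n:e_n(\C^r_d)\le\e\}$. The only facts I use are the two equivalences
\[
e_n(\C^r_d)\le\e\iff n\ge n(\e,d,r),\qquad e_n(\C^r_d)>\e\iff n<n(\e,d,r),
\]
both immediate from monotonicity. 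Write $a_r,b_r>0$ for the constants furnished by Theorem~\ref{thm:main}, so that $a_r^d(d/\e)^{d/r}\le n(\e,d,r)\le b_r^d(d/\e)^{d/r}$ on the relevant ranges of $\e$, and set $c_{r,1}:=a_r^r$ and $c_{r,2}:=b_r^r$.

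For the upper bound, fix $n$ and put $\e_0:=b_r^r\,d\,n^{-r/d}$. A one-line computation gives $b_r^d(d/\e_0)^{d/r}=n$, so by the upper bound of Theorem~\ref{thm:main} (valid for all $\e\in(0,1)$) we have $n(\e_0,d,r)\le b_r^d(d/\e_0)^{d/r}=n$ whenever $\e_0<1$; hence $n\ge n(\e_0,d,r)$ and therefore $e_n(\C^r_d)\le\e_0$. Together with the trivial $e_n(\C^r_d)\le e_0(\C^r_d)=1$ this yields $e_n(\C^r_d)\le\min\{1,c_{r,2}\,d\,n^{-r/d}\}$ for \emph{every} $n$, which in particular covers $n=m^d$. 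The stated general-$n$ bound $\min\{1,c_{r,2}\,d\,(n^{1/d}-1)^{-r}\}$ then follows a fortiori from $n^{-r/d}=(n^{1/d})^{-r}\le(n^{1/d}-1)^{-r}$; alternatively, and more in the spirit of the construction, one floors $m:=\lfloor n^{1/d}\rfloor$ and uses $e_n(\C^r_d)\le e_{m^d}(\C^r_d)\le c_{r,2}\,d\,m^{-r}$ with $m> n^{1/d}-1$.

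For the lower bound, fix $n$ and let $\e$ be any number with $\e<\min\{\tfrac12,\,a_r^r\,d\,n^{-r/d}\}$. Then $\e\in(0,\tfrac12)$, so Theorem~\ref{thm:main} applies, and $\e<a_r^r\,d\,n^{-r/d}$ rearranges to $a_r^d(d/\e)^{d/r}>n$; consequently $n<a_r^d(d/\e)^{d/r}\le n(\e,d,r)$, which forces $e_n(\C^r_d)>\e$. Letting $\e$ increase to $\min\{\tfrac12,a_r^r\,d\,n^{-r/d}\}$ gives $e_n(\C^r_d)\ge\min\{\tfrac12,c_{r,1}\,d\,n^{-r/d}\}$. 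The cap $\tfrac12$ is exactly the shadow of the restriction $\e\in(0,\tfrac12)$ in Theorem~\ref{thm:main}; the bound holds for all $n$ (the argument never used $n=m^d$), and, since the lower bound of Theorem~\ref{thm:main} is stated for integration over an arbitrary open set of volume one, so is the present one.

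The reduction itself is routine; the genuine content sits inside Theorem~\ref{thm:main}, which is where I would expect the real obstacle. On the upper side one must produce tensor-product quadratures (Gauss for the nonperiodic, rectangle/trapezoidal for the periodic class) whose $d$-dimensional error is controlled by a \emph{telescoping} sum of $d$ one-dimensional errors, so that $e_{m^d}(\C^r_d)\le c_r\,d\,m^{-r}$ with the linear-in-$d$ factor appearing cleanly; keeping the base constant \emph{independent of $d$} is the delicate point. On the lower side one must exhibit, for any $n$ nodes, a fooling function in $\C^r_d$ that vanishes at all nodes yet has integral of order $d\,n^{-r/d}$, which is where passing from $\ell_2$- to $\ell_1$-balls of bump functions (as announced in the introduction) is what upgrades the earlier exponent $d/(2r+3)$ to the sharp $d/r$ and supplies the matching factor $d$.
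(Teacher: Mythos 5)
Your proof is correct, and since the paper never writes out a proof of Theorem~\ref{thm:main2} (it is stated, like Theorem~\ref{thm:main}, as a packaged consequence of the detailed results), the comparison is with the paper's implicit route: there, the upper bound is read off directly from the product-rule error estimate $e(Q_m^{\,d},\C^{\,r}([0,1]^d))\le c_s\,d\,m^{-s}$ (Haber's tensor-product bound combined with K\"ohler's Gaussian-quadrature estimates) --- an error bound, so no inversion is needed, and the restriction $n=m^d$ with its $(n^{1/d}-1)^{-r}$ replacement for general $n$ arises naturally from taking $m=\lfloor n^{1/d}\rfloor$ --- while the lower bound comes straight from the fooling-function construction of Section~\ref{sec:lb}, which for every $n$-point set produces a function vanishing at the nodes with large integral, hence bounds $e_n$ for every $n$ directly. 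You instead treat Theorem~\ref{thm:main} as a black box and invert it through the elementary error/complexity duality $e_n(\C^{\,r}_d)\le\e \Leftrightarrow n\ge n(\e,d,r)$; this is legitimate and non-circular, because the paper establishes Theorem~\ref{thm:main} from the detailed bounds \eqref{per1}, \eqref{nonper2} and \eqref{dd} independently of Theorem~\ref{thm:main2}, and the one delicate point --- that the upper bound of Theorem~\ref{thm:main} is available for all $\e\in(0,1)$ rather than only $\e\in(0,1/2)$ --- is exactly what its ``Moreover'' clause supplies, and you invoke it correctly. Your route is shorter and in fact yields a slightly stronger statement: the clean bound $e_n(\C^{\,r}_d)\le\min\{1,\,c_{r,2}\,d\,n^{-r/d}\}$ for \emph{all} $n$, not only for $n=m^d$. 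This does not contradict the paper's more cautious formulation: the constant $c_{r,2}^d$ in Theorem~\ref{thm:main} has already absorbed the ceilings $\lceil\cdot\rceil^d$ coming from the product-rule construction, so the rounding loss that forces the paper's $(n^{1/d}-1)^{-r}$ weakening is simply hidden inside your correspondingly larger constant $c_{r,2}=b_r^r$. What the duality route does not deliver are the explicit constants of \eqref{per1}--\eqref{dd}; as you note yourself, the real mathematical content lives in the constructions behind Theorem~\ref{thm:main}.
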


Now we go more into the details and present bounds without any hidden constants. 
We start with 
the periodic case. To stress periodicity, 
we denote $n(\e,d,r)$ by $n^{\rm \,  per}(\e,d,r)$. For
$\e\in(0,d/(d+r)]$ we prove 
\begin{equation}\label{per1}
\frac{r}{r+d}\,\left(\frac{d}{d+r}\ \frac{1}{4^re^rr^{r-1}}\,
\frac{d}{\e}\right)^{d/r} 
\le n^{\rm \, per}(\e,d,r)\le
\left\lceil\left(\frac{1}{2\,(2\pi)^r}\,
\frac{d}{\e}\right)^{1/r}\right\rceil^d.
\end{equation}
In fact, the assumption that $\e\in(0,d/(d+r)]$ is only needed for the
lower bound, whereas the upper bound holds for all $\e\in(0,1)$.

We now turn to the nonperiodic case. 
Since the class of nonperiodic functions is larger than the class of
periodic functions we clearly have  
$n(\e,d,r)\ge n^{\rm \, per}(\e,d,r)$.
As already mentioned, to obtain upper bounds on $n(\e,d,r)$ 
we use product rules
based on univariate Gaussian quadratures. We use the error estimates of 
Gaussian quadratures proved by K\"ohler~\cite{K93} who studied the class $W_{\infty}^s$ of functions $f$ for which
$f^{(s-1)}$ is absolutely continuous and $\|f^{(s)}\|_\infty\le1$. 
Obviously, our class is a subset of $W_{\infty}^s$ for all $s\le r$
and we can apply  K\"ohler's
estimates which hold if the number of sample points is at least equal to
$s+1$. For the $d$-variate case, we use the result of Haber \cite{Ha70} for
product rules and obtain
\begin{equation}\label{nonper2}
n(\e,d,r)\le
\min_{s=1,2,\dots,r}\ \max\left\{(s+1)^d, 
\left\lceil\left(\frac{\pi}2\left(
\frac{e}{6\sqrt{3}}\right)^s\frac{d}{\e}\right)^{1/s}\right\rceil^{d}\right\}.
\end{equation}  
Obviously, for large $d/\e$ relative to $r$, more precisely, for 
$$
\frac{d}{\e}\ge \frac2{\pi}\left(\frac{6\sqrt{3}\,(r+1)}{e}\right)^r
$$ 
we have 
\begin{equation}\label{nonper3}
n(\e,d,r)\le
\left\lceil\left(\frac{\pi}2\left(
\frac{e}{6\sqrt{3}}\right)^r\frac{d}{\e}\right)^{1/r}\right\rceil^{d}.
\end{equation}  
In this case, the lower and upper bounds similarly depend on
$d$, $\e$ and are exponentially large in $d/\e$ and
exponentially small in $r$. This proves optimality
of product rules also for the nonperiodic case. 

The estimates \eqref{per1}--\eqref{nonper3} hold if the domain of
integration is $[0,1]^d$. Interestingly enough, our proof
technique for lower bounds works for more general integration domains.
We can integrate over an open set $D_d\subset\R^d$ of volume
one. Then, for the same smoothness class of real nonperiodic functions
defined over $D_d$,  we obtain a lower bound on the minimal number
$n(\e,D_d,r)$ of function values needed to compute an $\e$ approximation  
of the form
\begin{equation}\label{dd}
n(\e,D_d,r)\ge
\frac{r}{r+d}\,\left(\frac{d}{d+r}\ \frac{1}{6^re^rr^{r-1}}\,
\frac{d}{\e}\right)^{d/r}. 
\end{equation}
This is a similar lower bound as \eqref{per1} with $4^r$ replaced by
$6^r$. 

Assume first that $r$ is fixed. 
Then 
$n(\e,d,r)\asymp n^{\rm\,per}(\e,d,r)=\Theta((d/\e)^{d/r})$ which 
agrees with the bounds of Bakhvalov. We also have an exponential dependence
on $d$ which results in the curse of dimensionality, although it is 
delayed for large~$r$. Observe that the dependence of the lower and upper
bounds in \eqref{per1}--\eqref{dd} is exponentially small with respect to $r$. 

Now let $r$ be a function of $d$, i.e., $r=r(d)$. 
Although it is not a subject of this paper it
is easy to show that the curse of dimensionality still holds both in the nonperiodic and the periodic case
if $r(d)\,\ln\,r(d)=o(\ln\,d)$. It seems to be an interesting open problem 
to characterize how fast $r(d)$ must go to infinity with $d$ to break the curse
of dimensionality or to obtain various notions of tractability. We
are not sure if the bounds \eqref{per1} and \eqref{dd} 
are sufficiently sharp to solve such questions. 

It is well known that many multivariate problems are at least as hard
as multivariate integration. In particular, this holds for
multivariate approximation and optimization. That is why for the same
smoothness  class of $r$-times continuously differentiable functions 
we have the curse of dimensionality also for multivariate
approximation and optimization. Details are provided in Section
\ref{appopt}.


\section{Nonperiodic smooth functions}
In this section we study lower and upper bounds for numerical
integration of nonperiodic functions from the class 
$\C^{\,r}_d$. Lower bounds will be presented for more general
integration domains than $[0,1]^d$, whereas we show upper bounds only for 
the integration domain $[0,1]^d$.   

\subsection{Lower bounds} \label{sec:lb}

In this section we present a lower bound on $n(\e, d, r )$.
Interestingly enough, our proof technique can be applied not only for the
domain $[0,1]^d$ but also for an arbitrary open subset $D_d\subset \R^d$ of
volume ${\rm vol}_d(D_d)=1$. All definitions presented in the
previous section for $[0,1]^d$ (or, which is the same, for $(0,1)^d$)
readily generalize for $D_d$. In this case, we denote the class of
functions as $\C^{\,r}(D_d)$ with the norm
$$
\|f\|_{\C^{\,r}}=\max_{\beta:\,\|\beta|_1\le 1}\, \max_{x\in
  D_d}|(D^\beta f)(x)|.
$$
To stress the role of $D_d$ we denote the minimal number of function values
needed to compute an $\e$-approximation as $n(\e, D_d, r)$.

Let $\points=\{x_1,x_2,\dots,x_n\}\subset D^d$ 
be a collection of $n$ points. 
We construct a so-called fooling function $f$ from  
$\C^{\,r}(D_d)$ with $f(x_i)=0$, $i=1,2,\dots,n$, 
and as large as possible integral.
Due to \eqref{low1}, this allows us to get lower bounds on
$e_n(\C^{\,r}(D_d))$ and on $n(\e, D_d, r)$.

In fact, we will construct a fooling function $f$ from the Sobolev space
\[
W^{\,r+1}_\infty(D_d) \,=\, \{f: D_d\to\R\colon \ 
\|D^\beta f\|_\infty<\infty 
\,\text{ for all }\, \beta\in\N_0^d \,\text{ with }\, |\beta|_1\le r+1\}
\]
with $\|f\|_{\C^{\,r}}\le1$. 
Here $\|D^\beta f\|_\infty={\rm ess\,sup}_{x\in D_d}|(D^\beta f)(x)|$. 
Clearly, such a function $f$ belongs to $\C^{\,r}(D_d)$.

To define the fooling function for the given point set $\points$, 
we choose some $\varrho>0$, to be specified later,  and define the function 
\[ 
h_\varrho(x) \,=\, \min\left\{1, \frac{{\rm dist}(x,\points_\varrho)}{\varrho}\right\}
\qquad \mbox{for all}\quad x\in\R^d, 
\]
where for $A\subseteq \R^d$ we have   
\[
\dist(x,A) \,:=\, \min_{y\in A} \|x-y\|_1=
\min_{y\in A} \sum_{j=1}^d |x_j-y_j|
\]
and 
\[ 
\points_\varrho \,=\, \bigcup_{i=1}^n \bigl(\varrho B_1^d(x_i)\bigr). 
\] 
Here, $B_1^d(x_i)$ is the $\ell_1$-unit ball around $x_i$. 
We write $B_1^d$ for $B_1^d(0)$.

To treat functions of higher smoothness we consider the $r$-fold convolution 
of the function~$h_\varrho$ with the normalized indicator functions 
\begin{equation}\label{eq:gk} 
g_\varrho(x) \,=\, \frac{\ind{\varrho_r B_1^d}(x)}{\vol_d(\varrho_r B_1^d)}  
\,=\, \frac{1}{\vol_d(\varrho_r B_1^d)}\,\begin{cases} 
1 & \  \text{ if }\, x\in \varrho_r B_1^d,\\ 
0 & \  \text{ otherwise, } 
\end{cases} 
\end{equation} 
where $\varrho_r=\varrho/r$.
The convolution of a function $h$ with  $g_\varrho$  is given by 
\[ 
(h\ast g_\varrho)(x) \,=\, \frac{1}{\vol_d(\varrho_r B_1^d)}\,
\int_{\varrho_r B_1^d}  
h(x+t)\,\dint t   \qquad\mbox{for all}\quad x\in\R^d. 
\]  

The fooling function we consider in the sequel is therefore given by
\[
f_\varrho \,:=\, h_\varrho\ast \underbrace{g_\varrho\ast\dots\ast 
g_\varrho}_{r\text{-fold}}
\,:=\, h_\varrho\ast_r g_\varrho.
\]

It is clear that the support of the $r$-fold convolution of 
the function $g_\varrho$ is the $r$-fold Minkowski sum of the sets 
$\varrho_r B_1^d$, i.e.,  it is $\varrho B_1^d$.
This shows that the function $f_\varrho$ 
satisfies $f_\varrho(x)=0$ for all $x\in\points$. 

For the integral of $f_\varrho$, 
we only need to observe that $h_\varrho(x)=1$ 
if $\dist(x,\points)>2\varrho$ 
and that $\int_{\R^d}g_\varrho(x)\dint x=1$. 
This implies that $f_\varrho(x)=1$ if $\dist(x,\points)>3\varrho$, and hence 
\begin{equation}\label{eq:int}
\begin{split}
\int_{D_d} f_\varrho(x) \dint x 
\,&\ge\, \vol_d\left(D_d\setminus\points_{3\varrho}\right) 
\,\ge\, 1- \vol_d\left(\points_{3\varrho}\right) \\
\,&\ge\, 1- n\cdot \vol_d\left(3\varrho B_1^d(0)\right) 
\,=\, 1- n\,\frac{(6\varrho)^d}{d!} \\
\,&\ge\, 1- n \left(\frac{6e \varrho}{d}\right)^d.
\end{split}
\end{equation}
We stress that this bound holds for arbitrary collections of points 
$\points=\{x_1,x_2,\dots,x_n\}$ and sets~$D_d$. 
As we will see, $f_\varrho\in W^{\,r+1}_\infty(D_d)$ and the norm of the 
function $f_\varrho$ only depends on $\varrho$, $r$ and~$d$.
Since we are interested in a fooling function from the unit ball 
$\C^{\,r}(D_d)$ it remains to normalize $f_\varrho$. 
Hence we define
\begin{equation}\label{eq:fool}
f^*_\varrho(x) \,:=\, f_\varrho(x)/\|f_\varrho\|_{\C^{\,r}}  . 
\end{equation}
Using \eqref{eq:int} we obtain that $\int_{D_d}f^*_{\varrho}(x)\dint x 
\le \eps$ implies that 
\begin{equation}\label{eq:n_lower}
n \,\ge\, \left(1-\eps\cdot\|f_\varrho\|_{\C^{\,r}}\right)\,
\left(\frac{6e \varrho}{d}\right)^{-d}. 
\end{equation}
To finish our lower bound, we will choose $\varrho$ such that 
$\|f_\varrho\|_{C^k}\le \delta/\eps$ for some $\delta\in(0,1)$.

We bound the derivatives of $f_\varrho$ by induction. 
First of all note that $\|h_\varrho\|_\infty\le1$ and 
$\|D^\beta h_\varrho\|_\infty\le1/\varrho$ for $|\beta|_1=1$. 
Here, $D^\beta h_\varrho$ is the weak partial 
derivative of the Lipschitz-continuous function $h_\varrho$.
Let $e_j=(0,\dots,0,1,0,\dots,0)$ be the $j$th unit vector 
Then, for every $f\in L_\infty(\R^d)$, we have
\[\begin{split} 
D^{e_j}[f\ast g_\varrho](x) 
\,&=\, D^{e_j}\Bigl( \frac1{\vol_d(\varrho_r B_1^d)} \int_{\R^d}  
                f(x+t)\, \ind{\varrho_r B_1^d}(t) \,\dint t\Bigr) \\ 
&=\, \frac1{\vol_d(\varrho_r B_1^d)}\, D^{e_j}\Bigl(  
                \int_{e_j^\bot}  \int_{\R}  f(x+s+h e_j)\,  
                \ind{\varrho_r B_1^d}(s+h e_j) \,\dint h \,\dint s \Bigr) \\ 
&=\, \frac1{\vol_d(\varrho_r B_1^d)}\, \int_{e_j^\bot}\,  
                D^{e_j}\Bigl( \int_{\R}  f(x+s+h e_j)\,  
                \ind{\varrho_r B_1^d}(s+h e_j) \,\dint h \Bigr)\,\dint s, \\ 
\end{split}\] 
where $e_j^\bot$ is the hyperplane orthogonal to $e_j$.  
For any function $\wt f$ on~$\reals$ of the form 
\[  
\wt f(x) = \int_{x-a}^{x+a} g(y)\, \dint y  
\] 
with some continuous function $g$ we have 
\[ 
{\wt f}'(x) = g(x+a) - g(x-a). 
\] 
Therefore
\[\begin{split} 
\bigl|D^{e_j}[f\ast g_\varrho](x)\bigr| \,&=\, 
\Biggl|\frac1{\vol_d(\varrho_r B_1^d)}\,  
                \int_{(\varrho/r)B_1^d\cap e_j^\bot}\,  
\biggl[f\Bigl(x+s+h_{\rm max}(s)\,e_j
\Bigr)\, \\ 
&\qquad\qquad\qquad\qquad\qquad\qquad\qquad
-f\Bigl(x+s-h_{\rm max}(s)\,e_j
\Bigr) \biggr] 
\,\dint s \Biggr| \\
&\le\, \frac{2\,\vol_{d-1}(\varrho_r B_1^d\cap e_j^\bot)}
{\vol_d(\varrho_r B_1^d)}\,\|f\|_\infty
\,=\, \frac{2\,\vol_{d-1}(\varrho_r B_1^{d-1})}
{\vol_d(\varrho_r B_1^d)}\,\|f\|_\infty \\
\,&=\, \frac{d}{\varrho_r}\, \|f\|_\infty \,=\, \frac{d\,r}{\varrho}\, 
\|f\|_\infty
\end{split}\] 
with 
\[ 
h_{\rm max}(s) \,=\, \max\{h\ge0 \mid \ s+h e_j\in \varrho_r B_1^d\}. 
\] 
Moreover, using Young's inequality, we obtain
\[
\|D^{e_j}[f\ast g_\varrho]\|_\infty \,=\, \|(D^{e_j}f)\ast g_\varrho\|_\infty
\,\le\, \|(D^{e_j}f)\|_\infty.
\]
Using these inequalities recursively, 
we see that $f_\varrho\in W_\infty^{\,r+1}(D^d)$ with
\begin{eqnarray*}\label{prop1} 
 \Vert f_\varrho \Vert_\infty &\le& 1,\\ 
\label{prop2} 
 \forall{1\le\ell\le r+1}:\,\max_{\beta\in\N_0^d\colon |\beta|_1=\ell}\, 
        \|D^\beta f_\varrho\|_\infty &\le&  
        \varrho^{-\ell}\, \bigl(d\,r\bigr)^{\ell-1}. 
\end{eqnarray*}
This shows that 
\begin{equation}\label{eq:norm}
\|f_\varrho\|_{\C^{\,r}} 
\,\le\, \max\left\{1, \varrho^{-r}(d r)^{r-1} \right\},
\end{equation} 
if $\varrho\le d\,r$. Note that we simply ignore the bounds 
on $\|D^\beta f_\varrho\|_\infty$ 
for $|\beta|_1=r+1$. 
We now choose $\varrho =(\eps/\delta)^{1/r}(dr)^{1-1/r}$ to obtain 
$\|f_\varrho\|_{\C^{\,r}}\le\delta/\eps$ if $\eps\le\delta$.
This already implies the lower bound in our main result.

\begin{thm}\label{thm:lower}
For any $r,d\in\N$, $\delta\in(0,1)$ and $\eps\in(0,\delta]$ we have
$$
n(\eps, D_d , r)  \,\ge\, (1-\delta)\,c_r^d\,
\left(\frac{\delta\, d}{\eps}\right)^{d/r} \ \ \ \ 
\mbox{with}\ \ \ \  c_r=\frac1{6\,e\,r^{1-1/r}}. 
$$
Taking $\delta=d/(d+r)$, which maximizes the last bound, we obtain
$$
 n(\eps, D_d , r)   \,\ge\, 
\frac{r}{r+d}\,\left(\frac{d}{d+r}\ \frac{1}{6^re^rr^{r-1}}\,
\frac{d}{\e}\right)^{d/r}. 
$$
  \qed
\end{thm}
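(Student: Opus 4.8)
The plan is to assemble the ingredients already prepared above. The normalized fooling function $f^*_\varrho$ of \eqref{eq:fool} lies in the unit ball $\C^{\,r}(D_d)$ and vanishes at every sample point, so by the second identity in \eqref{low1} its integral is a lower bound for $e_n(\C^{\,r}(D_d))$. Consequently, any algorithm achieving error at most $\eps$ must force $\int_{D_d}f^*_\varrho\le\eps$, and this constraint has already been turned into the quantitative bound \eqref{eq:n_lower}, namely $n\ge(1-\eps\,\|f_\varrho\|_{\C^{\,r}})(6e\varrho/d)^{-d}$. The whole proof therefore reduces to inserting the announced choice of $\varrho$ and simplifying, so that the lower bound on $n$ carries over to $n(\eps,D_d,r)$.

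First I would fix $\varrho=(\eps/\delta)^{1/r}(dr)^{1-1/r}$ and check the side conditions under which \eqref{eq:norm} is valid. Since $\eps\le\delta$ and $dr\ge1$ give $\eps\le\delta\le\delta\,dr$, we have $\varrho\le dr$, so \eqref{eq:norm} applies; moreover a direct computation shows $\varrho^{-r}(dr)^{r-1}=\delta/\eps\ge1$, so the maximum in \eqref{eq:norm} is attained by the second term and $\|f_\varrho\|_{\C^{\,r}}\le\delta/\eps$. Plugging this into \eqref{eq:n_lower}, the prefactor becomes $1-\eps\,\|f_\varrho\|_{\C^{\,r}}\ge1-\delta$. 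It then remains to simplify the geometric factor: writing $6e\varrho/d=6er\,(\eps/(\delta dr))^{1/r}$ and raising to the power $-d$ collapses the powers of $r$ via $(6e)^{-d}r^{-d(1-1/r)}=c_r^d$, yielding $(6e\varrho/d)^{-d}=c_r^d(\delta d/\eps)^{d/r}$. This produces the first displayed bound $n(\eps,D_d,r)\ge(1-\delta)\,c_r^d(\delta d/\eps)^{d/r}$.

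For the second bound I would optimize the free parameter $\delta$. Since $c_r$ and $d/\eps$ do not depend on $\delta$, this amounts to maximizing $\psi(\delta)=(1-\delta)\,\delta^{d/r}$ on $(0,1)$; setting $\psi'(\delta)=\delta^{d/r-1}\bigl(d/r-\delta(r+d)/r\bigr)=0$ gives the unique interior maximizer $\delta=d/(d+r)$, which respects the hypothesis $\eps\le\delta$ exactly on the range $\eps\in(0,d/(d+r)]$. Substituting $1-\delta=r/(r+d)$ and $\delta^{d/r}=(d/(d+r))^{d/r}$, and absorbing the constant through the identity $c_r^d=(6^re^rr^{r-1})^{-d/r}$, merges every factor into a single $(d/r)$-th power and produces the claimed explicit bound. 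The genuinely delicate work---the recursive derivative estimates for $f_\varrho$ and the volume bound \eqref{eq:int}---is already behind us, so the only thing to watch here is the bookkeeping of the two constraints $\eps\le\delta$ and $\varrho\le dr$ together with the clean cancellation of the $r$-powers into $c_r$; I expect this constant-chasing, rather than any conceptual step, to be the main (and only minor) obstacle.
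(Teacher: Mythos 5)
Your proposal is correct and follows essentially the same route as the paper: the same choice $\varrho=(\eps/\delta)^{1/r}(dr)^{1-1/r}$ plugged into \eqref{eq:n_lower}, with the norm bound \eqref{eq:norm} giving $\|f_\varrho\|_{\C^{\,r}}\le\delta/\eps$, followed by maximizing $(1-\delta)\delta^{d/r}$ at $\delta=d/(d+r)$. Your explicit verification of the side conditions ($\varrho\le dr$ and that the second term attains the maximum in \eqref{eq:norm}) is bookkeeping the paper handles in the discussion preceding the theorem, so there is no substantive difference.
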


\begin{proof}
Using the construction of the fooling function with 
$\varrho=(\eps/\delta)^{1/r}(dr)^{1-1/r}$ as above, we obtain from 
\eqref{eq:n_lower} that 
\[
n \,\ge\, (1-\delta)\,\left(\frac{6e \varrho}{d}\right)^{-d} 
\,=\, (1-\delta) \left(6e\, r^{1-1/r}\right)^{-d} 
\left(\frac{\delta\, d}{\eps}\right)^{d/r}.
\]
Clearly,  the function $f(\delta)=(1-\delta)\delta^{d/r}$ is 
maximized for $\delta=d/(d+r)$ and substituting this~$\delta$ to the
previous bound we complete the proof.
\end{proof}
Note that the second bound on $n(\e, D_d , r)$ proves \eqref{dd}. 

\subsection{Upper bounds}

We describe a known upper error bound for numerical integration
for functions defined over the cube $[0,1]^d$ from the class
$\C^{\,r}_d=\C^{\,r}([0,1]^d)$,  
see \cite{Ha70}. 
We start with quadrature formulas~$Q_m$, $m \in \N$, 
for the univariate case $d=1$, 
$$
Q_m(f) = \sum_{i=1}^m a_i f(x_i) 
$$
with $a_i \in \R$,  $x_i \in [0,1]$ and $f:[0,1]\to\R$.
 
Then the (tensor) product rule $Q^{\,d}_m$ uses $m^d$ function values 
and  is defined by
$$
Q_m^d (f) = \sum_{i_1=1}^m \dots \sum_{i_d=1}^m a_{i_1} \dots a_{i_d} 
f(x_{i_1}, x_{i_2},\dots , x_{i_d}) ,
$$
where $f: [0,1]^d \to \R$. 

We now compare the worst case error 
$e(Q_m^{\,d}, \C^{\,r}([0,1]^d))$ 
with the error 
$e(Q_m, \C^{\,r}([0,1]))$. It is known that
\begin{equation}\label{upbo}
e(Q_m^{\,d}, \C^{\,r}([0,1]^d)) \le
\Big( \sum_{j=0}^{d-1} A^j \Big) \cdot 
e(Q_m, \C^{\,r}([0,1]))\ \ \ \ \mbox {with}\ \ \ \ A= \sum_{i=1}^m |a_i|. 
\end{equation}
Of course, we may use positive quadrature formulas
with $a_i \ge 0$ for which $A=1$.

For example, we may use the standard Gaussian formulas
for the class $\C^{\,s}([0,1])$ with $s=1,2,\dots,r$. 
Obviously, $\C^{\,r}([0,1])\subseteq\C^{\,s}([0,1])$. 
Then we obtain 
$$
e(Q_m, \C^{\,r}([0,1])) \le e(Q_m, \C^{\,s}([0,1])) \le 
c_s \,  m^{-s}, 
$$
where for $m>s$ we have 
$$
c_s=\frac{\pi}{2} \,  \left( \frac{e}{6\sqrt{3}}\right)^s. 
$$
This was proved in \cite{K93}, where 
the interval $[-1,1]$ was used instead of $[0,1]$. 
Hence, we need to rescale the problem
and multiply the estimate of \cite{K93} by $2^{-(s+1)}$. 
Observe that  $c_s$ is exponentially small in $s$, 
but this works only if $m>s$.  It is also known, see \cite{TWW88}
p.127, that the $m$th minimal error for algorithms that use not
necessarily positive coefficients
satisfies
$$
e_m(\C^{\,s}([0,1])=\frac{K_s}{(2\pi)^s}\,\frac1{m^r}(1+o(1)) \ \ \ \ \
\mbox{as}\ \ \ m\to\infty,
$$
where
$K_s=4/\pi\,\sum_{k=0}^\infty(-1)^{k(s+1)}\,(2k+1)^{-(s+1)}\in[1,\pi/2]$
is the Favard constant. 
Hence $(e/(6\sqrt{3}))^s=(0.26...)^s$ cannot be improved asymptotically
more than $(1/(2\pi))^s=(0.159\dots)^s$. 

For $s\in\{1,2\}$ we do not need to assume that $m\ge s$.
For $s=1$ we can take the optimal midpoint rule, see the discussion below.
For $s=2$ a better bound was proved in \cite{P93}. 

{}From \eqref{upbo} and the discussion of the Gaussian error bounds 
we obtain for $s=1,2,\dots,r$ and $m\ge s+1$ that
$$
e(Q_m^{\,d}, \C^{\,r}([0,1]^d)) \le
c_s \, d \, m^{-s}.
$$ 
Then $e(Q_m^{\,d}, \C^{\,r}([0,1]^d)) \le\e$ if we take
$$
m=m_s=\max\left\{ s+1,
\left\lceil\left(\frac{c_sd}{\e}\right)^{1/s}\right\rceil\right\}.
$$
Since $Q_{m_s}^{\,d}$ uses $m_s^d$ function values we have that
$n(\e,\C^{\,r}([0,1]^d)\ge \min_{s=1,2,\dots,r}m_s^d$.
This proves the following theorem.  
\begin{thm}
For any $r,d\in \N$ and $\e\in(0,1)$ we have
$$
n(\e,d,r)\le
\min_{s=1,2,\dots,r}\ \max\left\{ (s+1)^d, 
\left\lceil\left(\frac{\pi}2\left(
\frac{e}{6\sqrt{3}}\right)^s\frac{d}{\e}\right)^{1/s}\right\rceil^{d}\right\}.
$$
  \qed
\end{thm}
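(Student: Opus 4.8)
The plan is to assemble the theorem directly from the product-rule error estimate \eqref{upbo} together with K\"ohler's univariate Gaussian bound, both of which are available from the preceding discussion. Since we only seek an \emph{upper} bound, it suffices to exhibit one concrete algorithm of the right cardinality achieving error at most $\e$; no reduction to linear nonadaptive algorithms is required. The genuinely new work here is the optimization that produces the $\max$ and $\min$ structure of the claimed bound, whereas the analytic heavy lifting (Haber's tensorization inequality and K\"ohler's exponentially small constant) is taken as given from the cited results.

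First I would fix a smoothness parameter $s\in\{1,2,\dots,r\}$ and take $Q_m$ to be the $m$-point Gaussian quadrature rule on $[0,1]$. Because Gaussian weights are nonnegative, we have $A=\sum_{i=1}^m|a_i|=1$, so the geometric sum in \eqref{upbo} collapses to $\sum_{j=0}^{d-1}A^j=d$. Hence the product rule $Q_m^{\,d}$ satisfies $e(Q_m^{\,d},\C^{\,r}([0,1]^d))\le d\cdot e(Q_m,\C^{\,r}([0,1]))$. This is the crucial step at which the dimension enters only linearly rather than exponentially, and it is what ultimately yields the factor $d$ inside the bound.

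Next I would invoke the inclusion $\C^{\,r}([0,1])\subseteq\C^{\,s}([0,1])$, valid for every $s\le r$, to bound $e(Q_m,\C^{\,r}([0,1]))\le e(Q_m,\C^{\,s}([0,1]))$. K\"ohler's estimate, after the stated rescaling from $[-1,1]$ to $[0,1]$ by the factor $2^{-(s+1)}$, then gives $e(Q_m,\C^{\,s}([0,1]))\le c_s\,m^{-s}$ with $c_s=\frac{\pi}{2}\bigl(e/(6\sqrt3)\bigr)^s$, valid whenever $m>s$. Combining the two displays, for every $m\ge s+1$ we obtain $e(Q_m^{\,d},\C^{\,r}([0,1]^d))\le c_s\,d\,m^{-s}$.

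To force this error below $\e$ I would solve $c_s\,d\,m^{-s}\le\e$, i.e. $m\ge(c_s d/\e)^{1/s}$, and choose the smallest admissible integer, namely $m_s=\max\bigl\{s+1,\lceil(c_s d/\e)^{1/s}\rceil\bigr\}$. Since $Q_{m_s}^{\,d}$ uses exactly $m_s^d$ points and achieves error at most $\e$, we conclude $n(\e,d,r)\le m_s^d$; as $s$ ranges over $\{1,\dots,r\}$ we may take the best bound, giving $n(\e,d,r)\le\min_{s}m_s^d$, which is the assertion. The only subtlety that needs care is K\"ohler's hypothesis $m>s$: this is precisely what forces the floor $s+1$ and hence the inner $\max\{(s+1)^d,\dots\}$, and raising to the $d$th power commutes with the $\max$. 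I expect this bookkeeping, rather than any analytic difficulty, to be the main point requiring attention.
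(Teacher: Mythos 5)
Your proposal is correct and follows essentially the same route as the paper: Haber's tensorization bound \eqref{upbo} with positive Gaussian weights (so $A=1$ and the sum collapses to $d$), K\"ohler's rescaled estimate $c_s\,m^{-s}$ for $m>s$ via the inclusion $\C^{\,r}([0,1])\subseteq\C^{\,s}([0,1])$, the choice $m_s=\max\{s+1,\lceil(c_s d/\e)^{1/s}\rceil\}$, and minimization over $s$. Nothing further is needed.
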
 

Note that this proves \eqref{nonper2}. 
\vskip 1pc
It is interesting to notice that 
if we want to consider all $m\le r$ 
then to have an estimate $e(Q_m, \C^{\,r}([0,1])) \le c_r \,  m^{-r}$
we need to chose $c_r$ (super-)exponentially large in $r$. 
Indeed, take $m=2$ and 
consider the minimal worst case error $e_2$ 
of any two-point quadrature formula on the subset of 
the unit ball of $\C^{\,r}[0,1]$ 
consisting of polynomials of degree at most~4. 
Since for any two points $x_1,x_2$, a suitable polynomial 
$c (x-x_1)^2 (x-x_2)^2$ is in this unit ball for all $r$ 
(with  a positive $c$ independent of $r$), 
we find by a compactness argument that $e_2>0$. Hence,
$$ 
e_2 \le e(Q_2,\C^{\,r}([0,1])) \le c_r 2^{-r}
$$
implies $c_r \ge e_2 2^r$ for all $r$. Extending this argument 
to any fixed $m_0$ shows that there exist constants $e_{m_0}$ such
that 
$c_r \ge e_{m_0} m_0^r$ for all $r$.   

For $r=1$ we know more, see \cite{Su79}.
For $d=1$ we take the optimal midpoint rule $Q_m$ with error
$1/(4m)$   and obtain
$$
e(Q_m^d, W_\infty^1([0,1]^d)) \le \frac{d}{4}\,\frac1{n^{1/d}} 
$$
for $n=m^d$. This is not quite optimal, but almost. 
It is known that, asymptotically 
for large~$d$, the optimal constant is 
$d/(2e)$ instead of $d/4$. 


\section{Periodic smooth functions} 

In this section we study numerical 
integration over the domain $D_d=[0,1]^d$ 
for the classes of periodic functions. 
\[
\C^{\,r}_\pi  \,=\, \left\{f\in C^{\,r}(\R^d)\colon 
\ f \text{ is 1-periodic and }\|f\|_{C^r}\le1 \right\}.
\]
Hence, for all $f\in\C^{\,r}_\pi$ we have $f(x+e)=f(x)$
for all $x\in\R^d$ and all $e=(e_1,e_2,\dots,e_d)$ with
$e_j\in\{0,1\}$.
Since $\C^r_\pi\subset\C^r ([0,1]^d)$, 
all lower bounds for the class of periodic functions 
also hold for the class $\C^r ([0,1]^d)$.
We present slightly larger lower bounds and
smaller upper bounds for $\C^r_\pi$ compared to the results for the class  
$\C^r ([0,1]^d)$ that are provided in the last section.

\subsection{Lower bounds}
We will follow the same arguments as for nonperiodic functions.
For this let 
$$
\points=\{x_1,x_2,\dots,x_n\}\subset[0,1]^d
$$ 
be a collection of $n$ points. 
Since we want to construct a periodic fooling function, 
we consider the extended 
infinite point set $\wt\points:=\bigcup_{m\in\Z^d}(\points+m)$, and 
define for some $\varrho>0$ the function 
\[ 
\wt h_\varrho(x) \,=\, \min\left\{1, \frac{{\rm dist}(x,\wt\points_\varrho)}
{\varrho}\right\} 
\qquad \mbox{for all}\quad x\in\R^d, 
\]
where  
\[ 
\wt\points_\varrho \,=\, 
\bigcup_{m\in\Z^d}\bigcup_{i=1}^n \bigl(\varrho B_1^d(x_i+m)\bigr). 
\] 
Again, $B_1^d(x)$ is the $\ell_1$-unit ball in $\R^d$ around $x$ and 
we write $B_1^d$ for $B_1^d(0)$.
Clearly, $\wt h_\varrho$ is a 1-periodic function from $W^1_\infty(\R^d)$. 

Again, we consider the $r$-fold convolution of the function $\wt h_\varrho$
with the normalized indicator functions $g_\varrho$ from \eqref{eq:gk}, 
which we denote by $\wt f_\varrho$.
This function is obviously also 1-periodic and satisfies the same bounds on 
the derivatives as given in \eqref{eq:norm}.
However, the lower bound on the integral of $\wt f_\varrho$ over $[0,1]^d$ 
can be improved by the following argument. 

Let $g=g_\varrho\ast\dots\ast g_\varrho$ be the $r$-fold convolution 
of the functions $g_\varrho$ such that $\wt f_\varrho=\wt h_\varrho\ast g$. 
Then, using periodicity of $\wt h_\varrho$, we obtain
\[\begin{split}
\int_{[0,1]^d} \wt f_\varrho(x) \dint x 
\,&=\, \int_{[0,1]^d} \int_{\R^d}\wt h_\varrho(x-y)\, g(y) \dint y \dint x
\,=\, \int_{[0,1]^d} \wt h_\varrho(x) \dint x \int_{\R^d} g(y) \dint y \\
\,&=\, \int_{[0,1]^d} \wt h_\varrho(x) \dint x . 
\end{split}\]
Now we only use that $\wt h_\varrho$ satisfies $\wt h_\varrho(x)=1$ 
if $\dist(x,\wt\points)>2\varrho$ and 
obtain, similarly to \eqref{eq:int}, that
\[
\int_{[0,1]^d} \wt f_\varrho(x) \dint x 
\,\ge\, 1- n \left(\frac{4e\,\varrho}{d}\right)^d.
\]
Note that there is an improvement of $2^{-d}$ in the ``volume term''. 
Finishing the proof as for the nonperiodic case we obtain 
for any $\delta\in(0,1)$  and $\e\in(0, \delta]$
\[
n^{\rm per} (\eps, d, r)  \,\ge\, 
(1-\delta)\,c_r^d\,\left(\frac{\delta\, d}{\eps}\right)^{d/r} 
\]
with $c_r=1/(4e r^{1-1/r})$. 
We now choose $\delta $ to maximize the
lower bounds and obtain for $\delta=d/(d+r)$ the following
proposition. 
\begin{prop}
For any $r,d\in\N$, $\delta\in(0,1)$ and $\eps\in(0,\delta]$ we have
$$
n^{\rm per} (\eps, d, r)  \,\ge\, 
(1-\delta)\,c_r^d\,\left(\frac{\delta\, d}{\eps}\right)^{d/r} \ \ \ \ 
\mbox{with}\ \ \ c_r=\frac1{4e r^{1-1/r}}. 
$$
Taking $\delta=d/(d+r)$, which maximizes the last bound, we obtain
$$
n^{\rm per} (\eps, d, r)  \,\ge\, 
\frac{r}{r+d}\,\left(\frac{d}{d+r}\ \frac{1}{4^re^rr^{r-1}}\,
\frac{d}{\e}\right)^{d/r}.  
$$
  \qed
\end{prop}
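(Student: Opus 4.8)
The plan is to transcribe the normalization-and-optimization argument of Theorem~\ref{thm:lower}, feeding in the improved periodic integral estimate. First I would normalize the periodic fooling function to $\wt f^*_\varrho := \wt f_\varrho/\|\wt f_\varrho\|_{\C^{\,r}}$, which lies in the unit ball $\C^{\,r}_\pi$ and vanishes at every sample point $x_i$. For the optimal point set underlying $n^{\rm per}(\eps,d,r)$, the characterization~\eqref{low1} then forces $\int_{[0,1]^d}\wt f^*_\varrho(x)\dint x \le \eps$, and inserting the periodic bound $\int_{[0,1]^d}\wt f_\varrho(x)\dint x \ge 1-n\,(4e\varrho/d)^d$ exactly as in~\eqref{eq:n_lower} gives
\[
n \,\ge\, \bigl(1-\eps\,\|\wt f_\varrho\|_{\C^{\,r}}\bigr)\left(\frac{4e\varrho}{d}\right)^{-d}.
\]
The only difference from the nonperiodic case is the constant $4e$ in place of $6e$, which comes from the $2^{-d}$ saving in the volume term established above.

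Next I would choose $\varrho$ to control the norm via~\eqref{eq:norm}. Taking $\varrho=(\eps/\delta)^{1/r}(dr)^{1-1/r}$, the same choice as before, a short check shows that the hypothesis $\eps\le\delta$ forces $\varrho\le dr$ and $\varrho^{-r}(dr)^{r-1}=\delta/\eps\ge1$, so the maximum in~\eqref{eq:norm} is attained by the second term and $\|\wt f_\varrho\|_{\C^{\,r}}\le\delta/\eps$; hence $1-\eps\,\|\wt f_\varrho\|_{\C^{\,r}}\ge1-\delta$. Simplifying $(4e\varrho/d)^{-d}=(4e\,r^{1-1/r})^{-d}(\delta d/\eps)^{d/r}$ then produces the first displayed bound with $c_r=1/(4e\,r^{1-1/r})$.

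Finally I would optimize over the free parameter $\delta$. Since its dependence is exactly $(1-\delta)\delta^{d/r}$, elementary calculus gives the maximizer $\delta=d/(d+r)$, with $1-\delta=r/(d+r)$ and $\delta^{d/r}=(d/(d+r))^{d/r}$. Writing $c_r^d=(c_r^r)^{d/r}$ with $c_r^r=1/(4^re^rr^{r-1})$ folds the constant under the exponent $d/r$ and yields the second displayed bound. I expect no genuine obstacle: every structural ingredient---periodicity of $\wt f_\varrho$, its vanishing on $\points$, the derivative bounds~\eqref{eq:norm}, and the improved integral estimate---is already in hand, so the argument is a direct copy of Theorem~\ref{thm:lower} with $6e$ replaced by $4e$. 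The only point demanding care is the regime check that $\eps\le\delta$ keeps~\eqref{eq:norm} in the range where it contributes $\varrho^{-r}(dr)^{r-1}$ rather than $1$.
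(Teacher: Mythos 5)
Your finishing steps are all correct and coincide with the paper's: the normalization $\wt f^*_\varrho=\wt f_\varrho/\|\wt f_\varrho\|_{\C^{\,r}}$, the choice $\varrho=(\eps/\delta)^{1/r}(dr)^{1-1/r}$ together with the check that $\eps\le\delta$ makes the second term in \eqref{eq:norm} dominate, the resulting bound $n\ge(1-\delta)\,(4e\,r^{1-1/r})^{-d}(\delta d/\eps)^{d/r}$, and the optimization of $(1-\delta)\delta^{d/r}$ at $\delta=d/(d+r)$. This is exactly how the paper concludes (``finishing the proof as for the nonperiodic case'').

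However, there is a genuine gap at the one point where the periodic case actually differs from Theorem~\ref{thm:lower}: you never prove the ``improved periodic integral estimate'' $\int_{[0,1]^d}\wt f_\varrho(x)\,\dint x\ge 1-n\,(4e\varrho/d)^d$. You cite it as ``established above'' and ``already in hand'', but nothing in your argument establishes it, and it is precisely the content that turns the nonperiodic constant $6e$ into $4e$. If you transplant the nonperiodic reasoning verbatim, you need $\wt f_\varrho(x)=1$, which (because the $r$-fold convolution kernel enlarges the exceptional set by an extra $\varrho$) requires $\dist(x,\wt\points)>3\varrho$ and gives a volume term $n\,(6\varrho)^d/d!$ per point, i.e.\ the constant $6e$ again, not $4e$. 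The paper's actual argument is: since $\wt h_\varrho$ is $1$-periodic and $g=g_\varrho\ast\dots\ast g_\varrho$ is a probability density, Fubini and the shift-invariance of the integral of a periodic function over a full period yield
\[
\int_{[0,1]^d}(\wt h_\varrho\ast g)(x)\,\dint x \,=\, \int_{[0,1]^d}\wt h_\varrho(x)\,\dint x ,
\]
so one only needs $\wt h_\varrho(x)=1$, i.e.\ $\dist(x,\wt\points)>2\varrho$, and the volume term drops to $n\,(4\varrho)^d/d!\le n\,(4e\varrho/d)^d$. This convolution-preserves-the-mean observation, valid only in the periodic setting, is the missing idea; with it supplied, your proof is complete and identical to the paper's.
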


Note that this proves the lower bound of \eqref{per1}. 

\subsection{Upper bounds} \label{sec:ub}

For the univariate case, it was proved by Motorny{\u\i} \cite{M73,M74},
see also \cite{TWW88} pp. 119--122, that the rectangle 
(or trapezoidal) quadrature 
$$
Q_m(f)=\frac1m\,\sum_{i=0}^{m-1}f\left(\frac{i}m\right)
\ \ \ \ \ \mbox{for all}\ \ f\in
\C^{\,r}_\pi
$$
is optimal for numerical integration over the class
$\widetilde{W}_{\infty}^r$ of periodic
functions whose $(r-1)$ derivatives are absolutely continuous and
$\|f^{(r)}\|_\infty\le1$. Furthermore, the $n$th minimal error 
is $1/(2(2\pi)^rn^r)$. The domain of integration considered by Motorny{\u\i} was $[0,2\pi]$. Therefore we have to rescale the problem to switch to $[0,1]$ which corresponds to multiplying the bound
of Motorny{\u\i} by $(2\pi)^{-(r+1)}$.

Since the class $\C^{\,r}_\pi$ is a subset 
of $\widetilde{W}_{\infty}^r$ this implies that  
$$
e_m(Q_m,\C^{\,r}_\pi)\le\frac1{2\,(2\pi)^r\,m^r}.
$$
For the $d$-variate case, we take the tensor product rule $Q_m^{\,d}$ 
of $d$ copies of $Q_m$ that uses $m^d$ function values and is defined by 
$$
Q_m^{\,d} (f) = \frac1{m^d}\,
\sum_{i_1=0}^{m-1} \dots \sum_{i_d=0}^{m-1}
f\left(\frac{i_1}{m}, \dots , \frac{i_d}{m}\right) \ \ \ \ 
\ \mbox{for all}\ \ \ f\in C^{\,r}_\pi.
$$
The upper  bound on the error of $Q_m^d$ follows again from
\cite{Ha70} and we obtain 
$$
e(Q_m^{\,d}, \C^{\,r}_\pi) \le d\,e(Q_m, \C^{\,r}_\pi)\le
\frac{d}{2\,(2\pi)^r\,m^r}.
$$ 
Equating the upper bound to $\e$ we get the following proposition.

\begin{prop}
For any $r,d\in\N$ and $\eps\in(0,1)$ we have
$$
n^{\rm per} (\eps, d , r)   \,\le\,
\left\lceil\left(\frac{1}{2\,(2\pi)^r}\,\frac{d}{\e}\right)^{1/r}\right\rceil^d.
$$
   \qed
\end{prop}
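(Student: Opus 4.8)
The plan is to read off the bound by optimizing the error estimate for the tensor product rule that has already been assembled, so the proof reduces to a short calculation. First I would record the key simplification behind the estimate $e(Q_m^{\,d},\C^{\,r}_\pi)\le d\,e(Q_m,\C^{\,r}_\pi)$: the rectangle rule $Q_m$ has all weights equal to $1/m\ge0$, so $A=\sum_{i=0}^{m-1}|a_i|=1$ and the geometric factor $\sum_{j=0}^{d-1}A^j$ appearing in Haber's inequality \eqref{upbo} collapses to $d$. Combining this with Motornyi's rescaled univariate bound $e(Q_m,\C^{\,r}_\pi)\le 1/(2\,(2\pi)^r m^r)$, which applies because $\C^{\,r}_\pi\subset\widetilde{W}_\infty^{\,r}$, yields $e(Q_m^{\,d},\C^{\,r}_\pi)\le \frac{d}{2\,(2\pi)^r\,m^r}$, exactly the estimate displayed just above the statement.

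Next I would solve for the smallest admissible $m$. The requirement $e(Q_m^{\,d},\C^{\,r}_\pi)\le\e$ is implied by $\frac{d}{2\,(2\pi)^r\,m^r}\le\e$, which rearranges to $m\ge\left(\frac{1}{2\,(2\pi)^r}\,\frac{d}{\e}\right)^{1/r}$. Choosing $m$ to be the least integer satisfying this inequality, namely $m=\left\lceil\left(\frac{1}{2\,(2\pi)^r}\,\frac{d}{\e}\right)^{1/r}\right\rceil$, guarantees that $Q_m^{\,d}$ achieves error at most $\e$; here rounding up is harmless, since the error bound is monotonically decreasing in $m$.

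Finally, since $Q_m^{\,d}$ is a (nonadaptive, linear) algorithm using exactly $m^d$ function values, the definition of $n^{\rm per}(\e,d,r)$ as the minimal number of values needed to reach error $\e$ immediately gives $n^{\rm per}(\e,d,r)\le m^d$, which is the asserted bound.

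I do not expect any genuine obstacle at this stage. The two substantive ingredients, namely Motornyi's sharp constant $1/(2\,(2\pi)^r)$ for the univariate periodic problem and Haber's tensorization inequality \eqref{upbo}, are already in hand, and the class inclusion $\C^{\,r}_\pi\subset\widetilde{W}_\infty^{\,r}$ is what lets us invoke the former. The only points that require care are the bookkeeping ones: verifying that the rectangle weights are positive so that $A=1$ (hence the factor $d$ that is merely linear in the dimension, rather than exponential), and confirming that replacing the real threshold by its ceiling does not spoil the error guarantee.
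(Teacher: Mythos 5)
Your proof is correct and follows essentially the same route as the paper: Motornyi's rescaled univariate bound for the rectangle rule, Haber's tensorization inequality with the factor $d$ (valid since the weights are positive and sum to one), and then solving for the smallest admissible $m$ via the ceiling. The only difference is cosmetic: you spell out why $A=1$ makes the geometric factor collapse to $d$, which the paper states only in the nonperiodic section.
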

Note that this proves the upper  bound of \eqref{per1}.
 
\section{Multivariate approximation and optimization}\label{appopt}

We worked with the formula \eqref{low1} which says 
that, for the integration problem, 
the $n$th error bound 
$e_n (\C^r (D_d) )$ is given by
\begin{equation}\label{low11}
e_n(\C^r (D_d), \INT ) =    \inf_{x_1, \dots , x_n}\  \   \sup_{f\in \C^{\,r}(D_d)  , 
\ f(x_1)=\cdots =f(x_n)=0} \Big| \int_{D_d} f(x) \, {\rm d} x \Big|.
\end{equation}
Observe that we added the symbol $\INT$ since we now discuss two more problems, 
$\APP$ and $\OPT$. 
For 
the approximation problem we have 
$$
\APP (f) = S(f) = f
$$
with $S: \C^r (D_d)  \to L_\infty$
and we measure the error in the norm of $L_\infty$. 
Then we obtain the analogous formula 
\begin{equation}\label{low12}
e_n(\C^r (D_d) , \APP ) = \inf_{x_1, \dots , x_n}\  \   \sup_{f\in \C^{\,r}(D_d)  , \ 
f(x_1)=\cdots =f(x_n)=0} \Vert f \Vert_\infty .
\end{equation}
Clearly we obtain 
$$
e_n(\C^r (D_d), \INT  ) \le e_n(\C^r (D_d) , \APP) 
$$
and hence all lower bounds for integration also hold for the approximation 
problem. 

It is less known that also for the problem of 
(global) optimization, 
$$
S_d(f) = \OPT (f) = \sup_{x \in D_d}  f(x), 
$$
we get a very similar bound, namely
$$
\frac{1}{2} e_n (\C^r (D_d) , \APP) \le 
 e_n (\C^r (D_d) , \OPT) \le 
 e_n (\C^r (D_d) , \APP) , 
$$
see Wasilkowski~\cite{greg} and 
\cite{No88,NW10} for this and similar results. 
Hence all the lower bounds of this paper are also true 
(after a trivial modification because of the factor $1/2$) 
for the problem of global optimization. 
Actually it would not be difficult to improve the constants slightly 
for the problems $\APP$ and $\OPT$, but we do not go into the details.

\end{document}